% !TEX TS-program = latex

% For OO
\documentclass{article}
%\smartqed

% For OO
\usepackage[left=1in,right=1in,top=1in,bottom=1in]{geometry}

\usepackage{graphicx}
\usepackage{amssymb}
\usepackage{amsmath}
\usepackage{verbatim,booktabs}
\usepackage{subfigure,epsfig,url,psfrag}
\usepackage{float}
\usepackage{mathrsfs}

 \usepackage[usenames,dvipsnames]{pstricks}
 \usepackage{epsfig}
 \usepackage{pst-grad} % For gradients
 \usepackage{pst-plot} % For axes
\usepackage{soul}
\usepackage{enumitem}

\makeatletter
\newcommand{\setwindow}[5]{
\def\xmin{#1}%
\def\ymin{#2}%
\def\xmax{#3}%
\def\ymax{#4}%
\pstFPsub\viewingwidth{#3}{#1}%
\pstFPdiv\result{\strip@pt#5}{\viewingwidth}%
\psset{unit=\result pt}}
\makeatother

% For OO - begin
\usepackage{amsthm}

\newtheorem{proposition}{Proposition}

\newtheorem{example}{Example}

\newtheorem{remark}{Remark}
% For OO - end

\def\st{{\rm s.t.}}
\def\NP{{\mathcal NP}}

\def\ie{{i.e.,} }

\def\S{{\mathcal S}}

\def\I{{\mathcal I}}

\def\K{{\mathcal K}}
\def\I{{\mathcal I}}
\def\T{{\mathcal T}}

\def\P{{\mathcal P}}

\def\01{\ensuremath{0\mathord{-}1}}

\def\Ri{{\mathcal R}}

\def\MP{\text{MP}}

\def\LP{\text{LP}}

\newcommand{\bi}{\begin{list}{$\bullet$}{\setlength{\parsep}{0pt}\setlength{\itemsep}{0pt}}}

% Environment for Claim:
\newcounter{claim} %[section]
\newenvironment{claim}[1][]
{\refstepcounter{claim} \begin{trivlist} \item[] {\em Claim~\theclaim.}\space#1 \itshape}
{\end{trivlist}}
%\fi

% Environment for Proof of claim:

\newcounter{mynotes}
\setcounter{mynotes}{0}

\usepackage[ruled,vlined]{algorithm2e}

%\journalname{Mathematical Programming A}

\begin{document}

\title{On the strength of recursive McCormick relaxations for binary polynomial optimization}

\author{
Aida Khajavirad
\thanks{Department of Industrial and Systems Engineering,
             Lehigh University.
             E-mail: {\tt aida@lehigh.edu}.
             }
}

\date{\today}

\maketitle

\begin{abstract}
Recursive McCormick relaxations are among the most popular convexification techniques for binary polynomial optimization. It is well-understood that both the quality and the size of these relaxations depend on the recursive sequence and finding an optimal sequence amounts to solving a difficult combinatorial optimization problem. We prove that any recursive McCormick relaxation is implied by the extended flower relaxation, a linear programming relaxation, which for binary polynomial optimization problems with fixed degree can be solved in strongly polynomial time.
\end{abstract}

\emph{Key words:} Binary polynomial optimization, Recursive McCormick relaxations, Multilinear polytope, Extended  flower relaxation, Cutting planes.

\section{Binary polynomial optimization}
\label{sec:intro}
We consider a binary polynomial optimization problem of the form:
\begin{align}
\label{ML}
\tag{BP}
\begin{split}
\max & \quad \sum_{I\in \bar\I_0} {c^0_I \prod_{i\in I} {x_i}} \\
\st & \quad \sum_{I \in \bar\I_j} {c^j_I \prod_{i\in I} {x_i}} \leq b_j, \quad \forall j \in \{1,\ldots,m\}\\
& \quad x \in \{0,1\}^n,
\end{split}
\end{align}
where for each $j \in \{0,1,\ldots,m\}$, we denote by $\bar\I_j$ a family of nonempty subsets of $\{1,\ldots, n\}$, and $c^j_I$, $I\in \bar\I_j$ are nonzero real-valued coefficients.
We refer to each product term $\prod_{i\in I} {x_i}$ with $|I| \geq 2$ as a \emph{multilinear term}, and we refer to $r = \max\{|I| : I \in \cup_{j=0}^m \bar\I_j\}$ as the degree of Problem~\eqref{ML}. Throughout this paper, we assume that $r \geq 3$.
Problem~\eqref{ML} is $\NP$-hard even when $m=0$, and it has been the subject of extensive research by the mixed-integer nonlinear optimization (MINLP) community over the past three decades~\cite{yc93,s97,rs01,BorHam02, trx:10,lnl:12,bkst:14,dPKha16,dPIda16,CraRod17,dPIda18,dPIda19,dPIdaSah20,chenDashGun20}.

 % and it has been also referred to as Pseudo-Boolean optimization in the literature (\cf~\cite{BorHam02} for an extensive literature review).
%Since $x^p_i = x_i$, for $p \in \Z_+$ and $x_i \in \{0,1\}$, Problem~\eqref{ML} is equivalent to unconstrained \01 polynomial optimization.
%Moreover, it is simple to show that the maximum value of a multilinear function defined over a box is attained at a vertex of the box.
%Clearly, multilinear functions
%are closed under scaling and shifting of variables. It then follows that~\eqref{ML} is equivalent to maximizing a multilinear function over a box.
In this paper, we are interested in the quality of linear programming (LP) relaxations for Problem~\eqref{ML}. To convexify Problem~\eqref{ML}, it is common practice to first linearize the objective function and all constraints by introducing a new variable $y_I$ for every multilinear term $\prod_{i\in I} {x_i}$
and obtain an equivalent optimization problem in a lifted space:
\begin{align}
\label{EML}
\tag{EBP}
\begin{split}
\max & \quad \sum_{I\in \bar\I_0\setminus \I_0} {c^0_I x_I} + \sum_{I\in \I_0} {c^0_I y_I} \\
\st & \quad \sum_{I\in \bar\I_j\setminus \I_j} {c^j_I x_I} + \sum_{I\in \I_j} {c^j_I y_I} \leq b_j, \quad \forall j \in \{1,\ldots,m\}\\
& \quad y_I = \prod_{i\in I} {x_i}, \quad \forall I \in \bigcup_{j=0}^m{\I_j}\\
& \quad x \in \{0,1\}^n, \quad \forall i \in \{1,\ldots, n\},
\end{split}
\end{align}
where $\I_j$, $j \in \{0,1,\ldots,m\}$ consists of all elements in $\bar \I_j$ of cardinality at least two. Moreover, with a slight abuse of notation we set $x_{\{i\}}: =x_{i}$
Subsequently, a convex relaxation of the feasible region of Problem~\eqref{EML} is constructed and the resulting problem is solved to obtain an upper bound
on the optimal value of Problem~\eqref{ML}. The Reformulation Linearization technique (RLT)~\cite{st92} and the sum-of-squares hierarchy~\cite{parrilo03} are two general schemes for constructing LP and semi-definite programming (SDP) relaxations for polynomial optimization problems, respectively. While these methods often lead to strong bounds, they are too expensive to solve.
In the following, we focus on a number of convexification techniques that generate cheap LP relaxations for Problem~\eqref{EML} by building polyhedral relaxations for the set
\begin{equation}\label{set}
\S = \Big\{(x,y): \; y_I = \prod_{i\in I} {x_i}, \; \forall I \in \tilde \I := \bigcup_{j=0}^m{\I_j}, \; x \in \{0,1\}^n\Big\}.
\end{equation}
Throughout this paper, we refer to set $\S$ defined by~\eqref{set} as the~\emph{multilinear set} and we refer to its convex hull as the~\emph{multilinear polytope}.
Without loss of generality, we assume that each $x_i$, $i \in \{1,\ldots, n\}$ appears in at least one multilinear term.
Moreover, we refer to $r = \max\{|I| : I \in \tilde \I\}$ as the \emph{rank} of the multilinear set. Notice that the rank of a multilinear set equals to the degree of the corresponding binary polynomial optimization problem.

\subsection{Recursive McCormick relaxations}
Motivated by factorable programming techniques~\cite{mcc76}, a polyhedral relaxation of the multilinear set defined by~\eqref{set}
can be constructed by a recursive application of bilinear envelopes~\cite{kf83}. In the following we give a brief overview of this method in its full generality:
\begin{itemize}[leftmargin=*]
\item \emph{Initialization.} Initialize the sets $\Ri_I = \{I\}$ for all $I \in \tilde \I$, and $\tilde \K = \emptyset$. Mark each $L \in \Ri_I$ for all $I \in \tilde \I$ as \emph{unchecked}.
\item  \emph{Recursive Decomposition.} For each $I \in \tilde \I$, and for each $L \in \Ri_I$ such that $L \in \{I\} \cup \tilde \K$, consider the following cases:
       \begin{itemize}
           \item If $L$ is unchecked, then denote by $P_L =\{J, K\}$ a partition of $L$; that is, $J,K$ are nonempty, $J \cup K = L$ and $J \cap K = \emptyset$.
                 If $|J| >1 $ (resp. $|K| > 1$), let $\Ri_I = \Ri_I \cup \{J\}$ (resp. $\Ri_I = \Ri_I \cup \{K\}$).
                 Moreover, if $|J| > 1$ and $J \notin \tilde \I \cup \tilde \K$ (resp. $|K| > 1$ and $K \notin \tilde \I \cup \tilde \K$), then introduce the new \emph{artificial variable} $y_{J} = \prod_{i\in J} {x_i}$, update $\tilde \K = \tilde \K \cup \{J\}$, and mark $J$ as unchecked
                   (resp. $y_{K} = \prod_{i\in K} {x_i}$, update $\tilde \K = \tilde \K \cup \{K\}$, and mark $K$ as unchecked). Finally, mark $L$ as \emph{checked}.
            \item If $L$ is checked, then consider its partition $P_L = \{J,K\}$, constructed above.
                  If $|J| >1 $ (resp. $|K| > 1$), let $\Ri_I = \Ri_I \cup \{J\}$ (resp. $\Ri_I = \Ri_I \cup \{K\}$).
       \end{itemize}
\end{itemize}

It then follows that for each $L \in \Ri_I$ with $P_L = \{J,K\}$, we have $y_L = y_{J} y_{K}$, where we define $y_{\{i\}} := x_i$ for all $i \in \{1,\ldots,n\}$.
Therefore, the multilinear set $\S$ can be written, in a lifted space of variables, as a collection of bilinear equations of the form $y_{J_t \cup K_t} = y_{J_t}  y_{K_t}$, $t \in \{1,\ldots, T\}$. Subsequently a polyhedral relaxation of this set is obtained by replacing each bilinear equation by its convex hull over the unit hypercube:
\begin{align}\label{bienv}
\begin{split}
& y_{J_t \cup K_t}  \geq 0, \quad y_{J_t \cup K_t} \geq y_{J_t}+y_{K_t}-1, \\
& y_{J_t \cup K_t}  \leq y_{J_t}, \quad y_{J_t \cup K_t}  \leq y_{K_t},
\end{split} \quad \forall t \in \{1,\ldots,T\}.
\end{align}
This simple relaxation technique is used extensively in the literature and is often referred to as the \emph{recursive McCormick relaxation} (RMC).
In the remainder of this paper, given an RMC of the multilinear set $\S$, we refer to $\Ri_I$ as the~\emph{recursive sequence} of the multilinear term $y_I = \prod_{i \in I}{x_i}$, $I \in \tilde \I$
and we refer to $\Ri_I$, $I \in \tilde \I$ as the recursive sequence of $\S$.
Moreover, by the~\emph{size} of an RMC, we imply the number of its artificial variables; \ie $|\tilde \K|$.
For notational simplicity, throughout the paper, for any subset of $\{1,\cdots,n\}$, say for example $\{i,j,k\}$, we write $y_{ijk}$ instead of $y_{\{i,j,k\}}$; similarly,
we write $\Ri_{ijk}$ instead of $\Ri_{\{i,j,k\}}$.
%Notice that one can fully characterize an RMC by its recursive sequence.

For instance, the global solver {\tt BARON}~\cite{IdaNick18} uses an RMC scheme in which for each $y_{J_t \cup K_t} = y_{J_t}  y_{K_t}$, it chooses $J_t,K_t$
such that $|J_t|=1$ (see~\cite{tawsah04} for further details).  For example, consider the multilinear set $\S=\{(x,y): y_{1234}= x_1 x_2 x_3 x_4, \; x \in \{0,1\}^4\}$; then the factorable decomposition module in {\tt BARON} constructs the following bilinear equations $y_{1234}= x_1 y_{234}, y_{234} = x_2 y_{34}, y_{34} = x_3 x_4$; in this case, the recursive sequence of $y_{1234}$ is given by $\Ri_{1234} = \{\{1,2,3,4\},\{2,3,4\},\{3,4\}\}$ and artificial variables of this RMC are $y_{234}, y_{34}$.
%Clearly, a bilinear set with a smaller number auxiliary variables can be obtained by using the recursive sequence $(1,2), (3,4)$ .

For a multilinear set, there exist many different recursive sequences, in general. It is well-understood that the choice of the recursive sequence impacts both the size and the quality of the resulting RMC.
In~\cite{ragh22}, the authors study the impact of the recursive sequence on the size of and the quality of the resulting RMC. First, the authors show that even for multilinear sets of rank three, the problem of finding a minimum-size RMC is $\NP$-hard. Moreover, they propose a mixed-integer programming (MIP) formulation to solve this problem. Subsequently, they propose a MIP formulation to identify RMCs of a given size with the best possible upper bound. Their numerical experiments suggest that their proposed RMCs often outperform the simpler RMCs implemented in global solvers. To better illustrate this point, we use the running example of~\cite{ragh22} which we will later use to explain our results as well:

\begin{example}\label{ex1}
Consider the problem of maximizing $f(x)=-x_1x_2x_3 + x_2x_3x_4+x_1x_3 x_4$ over $x \in \{0,1\}^4$. It can be checked that if we solve an RMC with the recursive sequence
$$\Ri^1_{123} = \{\{1,2,3\}, \{1,2\}\}, \; \Ri^1_{234} = \{\{2,3,4\},\{2,3\}\}, \; \Ri^1_{134} = \{\{1,3,4\}, \{1,3\}\},$$
%$$\Ri_1=\{(12,3),(23,4),(13,4), (1,2), (2,3), (1,3)\},$$
we obtain the upper bound $U= \frac{4}{3}$, while if we solve an RMC with the recursive sequence
$$\Ri^2_{123} = \{\{1,2,3\},\{1,3\}\}, \; \Ri^2_{234} = \{\{2,3,4\},\{3,4\}\}, \; \Ri^2_{134} = \{\{1,3,4\}, \{3,4\}\},$$
we obtain the upper bound $U=1.0$, which is sharp.  The recursive sequence $\Ri^2$ outperforms $\Ri^1$ since the artificial variable $y_{34}$ introduced in $\Ri^2$ appears in the recursive sequence of two  multilinear terms $y_{234} = x_2x_3x_4$, $y_{134} = x_1x_3 x_4$, while all artificial variables of $\Ri^1$ appear in the recursive sequence of one multilinear term.
\end{example}

\subsection{Multilinear polytopes and hypergraph acyclicity}
In~\cite{dPKha16}, we introduce a hypergraph representation of multilinear sets which in turn
enables us to characterize multilinear polytopes of hypergraphs with various degrees of acyclicity~\cite{fag83} in the space of original variables~\cite{dPIda18,dPIda19}. As a result, we obtain strong polyhedral relaxations for general multilinear sets~\cite{dPIdaSah20}.
In the following, we briefly review two polyhedral relaxations of multilinear sets and introduce a new stronger polyhedral relaxation; in the next section, we compare these relaxations with RMCs.

Before proceeding further, we introduce our hypergraph terminology.
Recall that a hypergraph $G$  is a pair $(V,E)$ where $V=V(G)$ is the set of nodes of $G$, and $E=E(G)$
is a set of subsets of $V$ of cardinality at least two, called the edges of $G$.
%Throughout this paper, we consider hypergraphs without parallel edges (multiple edges containing the same set of nodes).
With any hypergraph $G$, we associate a multilinear set $\S_G$ defined as follows:
\begin{equation} \label{eq: SG}
\S_G= \Big\{ z \in \{0,1\}^d : z_e = \prod_{v \in e} {z_{v}}, \; e \in E \Big\},
\end{equation}
where $d = |V|+|E|$.
We denote by $\MP_G$ the convex hull of $\S_G$.
Note that variables $z_{v}$, $v \in V$ in~\eqref{eq: SG} correspond to variables $x_i$, $i \in \{1,\ldots,n\}$ in~\eqref{set}
and variables $z_e$,  $e \in E$ in~\eqref{eq: SG} correspond to variables $y_I$, $I \in \tilde \I$ in~\eqref{set}.

\paragraph{The standard linearization.} A simple polyhedral relaxation of $\S_G$ can be obtained by replacing each multilinear term
$z_e  = \prod_{v\in e}{z_v}$, by its convex hull over the unit hypercube:
\begin{eqnarray}\label{stdre}
\MP^{\LP}_G & =\Big\{z:  & z_v \leq 1, \; \forall v \in V, \nonumber \\
&  & z_e  \geq 0, \; z_e \geq \sum_{v\in e}{z_v}-|e|+1, \; \forall e \in E,\\
&  & z_e  \leq z_v, \forall v \in e, \; \forall e \in E \Big\}.\nonumber
\end{eqnarray}
The above relaxation has been used extensively in the literature and is often referred to as the~\emph{standard linearization}
of the multilinear set (see for example~\cite{gw:74,yc93}). In~\cite{dPIda18}, we prove that $\MP^{\LP}_G  = \MP_G$ if and only if $G$ is a Berge-acyclic hypergraph; \ie the most restrictive type of hypergraph acyclicity~\cite{fag83}.
For general hypergraphs however, the standard linearization often provides a weak relaxation of multilinear sets.
As we show in the next section, the standard linearization corresponds to the projection of an RMC, if the common product terms across various multilinear terms are not exploited. The following example demonstrates this result:

\begin{example}\label{ex2}
Consider the binary polynomial optimization problem considered in Example~\ref{ex1}. It can be checked that using the standard linearization of the corresponding
multilinear set, we obtain an upper bound $U=\frac{4}{3}$, which is equal to the bound obtained using the RMC with no artificial variable appearing in the recursive sequence of  more than one multilinear term.
\end{example}

\paragraph{The flower relaxation and the extended flower relaxation.} In~\cite{dPIda18}, we introduce the flower relaxation of multilinear sets, a polyhedral relaxation that is often significantly stronger than the
standard linearization. To formally define this relaxation, we first need to introduce flower inequalities~\cite{dPIda18}.
Let $e_0$ be an edge of $G$ and let $e_k$, $k \in K$, be the set of all edges adjacent to $e_0$ with
\begin{equation}\label{oldCond0}
|e_0 \cap e_k| \geq 2, \quad \forall k \in K.
\end{equation}
Let $T$ be a nonempty subset of $K$ such that
\begin{equation}\label{oldCond}
e_0 \cap e_i\cap e_j = \emptyset, \quad \forall \; i\neq j\in T.
\end{equation}
Then the~\emph{flower inequality} centered at $e_0$ with neighbors $e_k$, $k \in T$, is given by:
\begin{equation}\label{flowerIneq}
\sum_{v \in e_0\setminus \cup_{k\in T} {e_k}}{z_v}+\sum_{k \in T} {z_{e_k}} - z_{e_0} \leq |e_0\setminus \cup_{k\in T} {e_k}|+|T|-1.
\end{equation}
We refer to inequalities of the form~\eqref{flowerIneq}, for all nonempty $T \subseteq K$
satisfying conditions~\eqref{oldCond0} and~\eqref{oldCond}, as the system of flower inequalities centered at $e_0$.
We define the \emph{flower relaxation} $\MP^F_G$ as the polytope obtained by adding the system of flower inequalities centered at each edge of $G$ to $\MP^{\LP}_G$.
In~\cite{dPIda18}, we prove that $\MP^F_G =\MP_G$ if and only if $G$ is a $\gamma$-acyclic hypergraph. We should remark that $\gamma$-acyclic hypergraphs represent
a significant generalization of Berge-acyclic hypergraphs~\cite{fag83}. As we detail in~\cite{dPIdaSah20}, while the separation problem over the flower relaxation is $\NP$-hard for general hypergraphs, it can be solved in $O(|E|^2)$ time for hypergraphs with fixed rank.

\begin{example}\label{ex3}
Consider the binary polynomial optimization problem considered in Example~\ref{ex1}. It can be checked that using the flower relaxation of the corresponding
multilinear set, we obtain an upper bound $U= 1.0$, which coincides with the best RMC bound.
\end{example}

While in Example~\ref{ex3} the flower relaxation is as good as the best RMC, this is not always the case.
Next, we introduce a stronger polyhedral relaxation of multilinear sets that dominates all RMC relaxations.
To this end, we first present a class of valid inequalities for multilinear sets that can be considered as a generalization of flower inequalities.
Let $e_0$ be an edge of $G$ and as before, denote by $e_k$, $k \in K$, the set of all edges adjacent to $e_0$.
Let $T$ be a nonempty subset of $K$ such that
\begin{equation}\label{newCond}
\gamma_i :=\Big|(e_0 \cap e_i)\setminus \bigcup_{j \in T \setminus \{i\}}{(e_0 \cap e_j)}\Big| \geq 2, \quad \forall i \in T.
\end{equation}
Then the~\emph{extended flower inequality} centered at $e_0$ with neighbors $e_k$, $k \in T$, is given by~\eqref{flowerIneq}.
Similarly, we define the \emph{extended flower relaxation} $\MP^{EF}_G$ as the polytope obtained by adding the system of extended flower inequalities centered at each edge of $G$ to $\MP^{\LP}_G$. Notice that conditions~\eqref{oldCond0} and~\eqref{oldCond} imply condition~\eqref{newCond}; namely, flower inequalities are a special case of extended flower inequalities. Indeed, extended flower inequalities are a generalization of flower inequalities as they subsume cases in which $e_0 \cap e_i\cap e_j \neq \emptyset$
for some $i\neq j\in T$.

We should remark that in the special case where the neighbours $e_k$, $k \in T$ satisfy the so-called~\emph{running intersection property}, it can be checked that~\emph{running intersection inequalities}~\cite{dPIda19} imply extended flower inequalities. However, in general, extended flower inequalities are not implied by running intersection inequalities. The following example illustrates these facts:

\begin{example}
Consider the hypergraph $G = (V, E)$ with $V = \{v_1, \ldots, v_9\}$ and $E = \{e_0, e_1, e_2, e_3\}$, where $e_0 = V$, $e_1 = \{v_1, \ldots, v_4\}$,
$e_2 = \{v_4, \ldots, v_7\}$, and $e_3 = \{v_1, v_7, v_8, v_9\}$. Then all flower inequalities of $\S_G$ are given by:
\begin{align}\label{fls}
\begin{split}
&\sum_{v \in e_0 \setminus e_i}{z_v}+ z_{e_i} - z_{e_0} \leq 5, \\
&\quad z_{e_0} - z_{e_i} \leq 0,
\end{split} \quad \forall i \in \{1,2,3\},
\end{align}
and the remaining running intersection inequalities of $\S_G$ are given by
\begin{equation}\label{rIs}
-z_{e_i \cap e_j} + \sum_{v \in e_0 \setminus (e_i \cup e_j)}{z_v}+ z_{e_i}+z_{e_j}-z_{e_0} \leq 2, \quad \forall i \neq j \in \{1,2,3\}.
\end{equation}
Moreover, in addition to inequalities~\eqref{fls}, we have the following extended flower inequalities:
\begin{align}
& \sum_{v \in e_0 \setminus (e_i \cup e_j)}{z_v}+ z_{e_i}+z_{e_j}-z_{e_0} \leq 3, \quad \forall i \neq j \in \{1,2,3\}\label{exf11}\\
& \qquad z_{e_1}+ z_{e_2}+z_{e_3}-z_{e_0} \leq 2. \label{exf12}
\end{align}
First notice that using inequalities $z_v \leq 1$ for all $v \in V$, which are present in the standard linearization of $\S_G$, we deduce that
extended flower inequalities~\eqref{exf11} are implied by the running intersection inequalities~\eqref{rIs}. However, the extended flower inequality~\eqref{exf12}
is not implied by any other inequalities. To see this consider the point $\tilde z_p$, $p \in V \cup E$ defined as:
\begin{align}\label{pt}
& \tilde z_{v} = 1, \quad \forall v \in \bar V: = \{v_1, v_4, v_7\}\nonumber\\
& \tilde z_{v} =\tilde z_{e} = \frac{3}{4}, \quad \forall v \in V \setminus \bar V, \; \forall e \in \{e_1, e_2, e_3\}\\
& \tilde z_{e_0} = 0 \nonumber.
\end{align}
Clearly, $\tilde z$ violates the extended flower inequality~\eqref{exf12}, since $3 (\frac{3}{4}) \not\leq 2$.
However, it can be checked that $\tilde z$ satisfies all inequalities present in
the standard linearization of $\S_G$, as well as flower inequalities~\eqref{fls}, and running intersection inequalities~\eqref{rIs}. This is due to the fact that while the set $\{e_i, e_j\}$, has the running intersection property
for all $i \neq j \in \{1,2,3\}$, the set $\{e_1, e_2, e_3\}$ do not have the running intersection property and hence no running intersection inequality can be generated for the latter. See~\cite{dPIda19} for details regarding running intersection inequalities.
\end{example}

\begin{remark}\label{rem1}
It is important to note that assumption~\eqref{newCond} is without loss of generality. To see this, consider an edge $e_0$
and a subset of adjacent edges $e_k$, $k \in T$ some of which do not satisfy assumption~\eqref{newCond}. The following cases arise:
\begin{itemize}
\item $\gamma_i = 0$ for some $i \in T$. In this case, the extended flower inequality is implied by two inequalities: (i) $z_{e_i} \leq 1$,
and (ii) the extended flower inequality centered at $e_0$ with neighbours $e_k$, $k \in T \setminus \{i\}$.

\item $\gamma_i = 1$ for some $i \in T$. In this case, the extended flower inequality is implied by two inequalities: (i) $z_{e_i} \leq z_{\bar v}$,
where $\{\bar v\} = e_0 \cap e_i$, and (ii) the extended flower inequality centered at $e_0$ with neighbours $e_k$, $k \in T \setminus \{i\}$.
\end{itemize}
Hence we conclude that to construct the extended flower relaxation, it suffices to consider neighbours $e_k$, $k \in T$ satisfying assumption~\eqref{newCond}.
\end{remark}

We conclude this section by commenting on the complexity of optimizing over the extended flower relaxation.
Clearly, optimization over the standard linearization can be done in polynomial time. Hence it suffices to consider extended flower inequalities.
In~\cite{dPIdaSah20}, the authors prove that the separation problem over flower inequalities is $\NP$-hard.
However, they show that for hypergraphs of fixed degree, the separation problem can be solved in strongly polynomial time.

\paragraph{Separating over extended flower inequalities.} In the following, we present a separation algorithm over extended flower inequalities that runs
in strongly polynomial-time for hypergraphs $G=(V,E)$ of fixed degree $r$; \ie in a number of
iterations bounded by a polynomial in $|V |$ and $|E|$. By equivalence of separation and optimization, this in turn implies that optimizing over
the extended flower relaxation can be done in polynomial time. The fixed degree assumption is reasonable as for binary polynomial optimization problems, $|V|$
and $|E|$ represent the number of variables and multilinear terms, respectively, while $r$ is the degree of the polynomial.
Indeed, for all problems appearing in applications, while we have hundreds or thousands of variables, the degree of the polynomial is a small number; namely,
$r \ll 10$.  In~\cite{dPIda19}, we present an efficient separation algorithm for running-intersection inequalities. While the extended flower inequalities do not imply and are not implied by running-intersection inequalities, they can be separated using similar ideas.
Hence, in the following, we provide a sketch of our separation algorithm and
refer the reader to~\cite{dPIda19} for further details.

Let us start by formally defining the separation problem: Given a hypergraph $G = (V,E)$, and a vector $\bar z \in [0, 1]^{V+E}$,
decide whether $\bar z$ satisfies all extended flower inequalities or not,
and in the latter case, find an extended flower inequality that is violated by $\bar z$.

Consider an edge $e_0 \in E$, and denote by $E_{e_0}$ the subset of $E$ containing, for every $f \subseteq e_0$ with $|f| \ge 2$,
among all the edges $e \in E \setminus \{e_0\}$ with $e_0 \cap e = f$, only one that maximizes $\bar z_{e}$.
It then follows that in order to solve the separation problem over all extended flower inequalities in $G$ centered at $e_0$, it suffices
to do the separation over the (much smaller) set of extended flower inequalities centered $e_0$, with neighbors $e_k$ contained in $E_{e_0}$.
In the following, we assume that a rank-$r$ hypergraph $G = (V,E)$ is represented
by an incidence-list in which every edge
stores the vertices it contains. We assume the vertex list for each edge is sorted, otherwise, such a representation
can be constructed in $O(r|E|)$ operations using some integer sorting algorithm. We are now ready to present our separation algorithm:

\begin{proposition} \label{sepAlg}
Consider a rank-$r$ hypergraph $G = (V, E)$ and a vector $\bar z \in [0,1]^{V+E}$. Then the separation problem over all extended flower inequalities
can be solved in $O(|E|(r2^r|E|+r^2 2^{r^2/2}))$ operations.
\end{proposition}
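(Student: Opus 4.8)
The plan is to solve the separation problem one \emph{center} at a time. The family of all extended flower inequalities partitions according to the center edge $e_0 \in E$, so it suffices to design, for each fixed $e_0$, a routine that finds the most violated extended flower inequality centered at $e_0$ (or certifies that none is violated), and then to return the best inequality over the $|E|$ centers. For a fixed $e_0$ and an index set $T$ satisfying~\eqref{newCond}, the violation of the associated inequality~\eqref{flowerIneq} is
\begin{equation*}
\nu(T) = \sum_{v \in e_0 \setminus \bigcup_{k \in T} e_k} \bar z_v + \sum_{k \in T} \bar z_{e_k} - \bar z_{e_0} - \Big| e_0 \setminus \bigcup_{k \in T} e_k \Big| - |T| + 1 ,
\end{equation*}
and the task reduces to maximizing $\nu(T)$ over all admissible $T$. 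Two structural facts, which I would establish first, make this maximization tractable: a \emph{reduction of the candidate neighbors} to the set $E_{e_0}$, and an \emph{a priori bound} on $|T|$.

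For the first fact, note that $e_0 \setminus \bigcup_{k \in T} e_k = e_0 \setminus \bigcup_{k \in T}(e_0 \cap e_k)$, so $\nu(T)$ depends on a neighbor $e_k$ only through its \emph{trace} $e_0 \cap e_k$ and the scalar $\bar z_{e_k}$; likewise, condition~\eqref{newCond} depends only on the traces. Consequently, among all edges sharing a common trace $f = e_0 \cap e$, replacing a chosen neighbor by the edge of that trace with largest $\bar z_e$ preserves~\eqref{newCond} and can only increase $\nu(T)$. Moreover, two neighbors in an admissible $T$ must have distinct traces, since equal traces would force $\gamma_i = 0$, contradicting~\eqref{newCond}. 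This justifies restricting the search to neighbors drawn from $E_{e_0}$, which retains at most one edge per subset $f \subseteq e_0$ with $|f| \ge 2$, so that $|E_{e_0}| \le 2^r$.

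For the second fact, the sets $(e_0 \cap e_i) \setminus \bigcup_{j \in T \setminus \{i\}}(e_0 \cap e_j)$, for $i \in T$, are pairwise disjoint subsets of $e_0$, so $\sum_{i \in T} \gamma_i \le |e_0| \le r$; since~\eqref{newCond} forces $\gamma_i \ge 2$ for every $i$, we obtain $|T| \le \lfloor r/2 \rfloor$. Combining the two facts, it suffices to enumerate every $T \subseteq E_{e_0}$ with $|T| \le \lfloor r/2 \rfloor$, of which there are at most $\sum_{t=0}^{\lfloor r/2 \rfloor} \binom{2^r}{t} = O(2^{r^2/2})$. For each such $T$ I would encode the traces as bitmasks over the sorted vertices of $e_0$, maintain a per-vertex coverage count to test~\eqref{newCond} and evaluate $\nu(T)$ in $O(r^2)$ operations, and keep a running maximizer. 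By Remark~\ref{rem1}, restricting to~\eqref{newCond} discards no violated inequality, so the routine correctly returns the most violated inequality centered at $e_0$.

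Finally I would total the running time. For each of the $|E|$ centers, building $E_{e_0}$ costs $O(r 2^r |E|)$ (for each of the at most $2^r$ admissible traces $f$, scan the edges and compute each intersection $e_0 \cap e$ in $O(r)$ via a merge of sorted vertex lists), and the enumeration costs $O(r^2 2^{r^2/2})$; summing over centers yields the claimed bound $O\big(|E|(r 2^r |E| + r^2 2^{r^2/2})\big)$. I expect the main obstacle to lie in the two structural claims rather than in the bookkeeping: rigorously verifying that the trace-based reduction to $E_{e_0}$ preserves a maximally violated inequality, and proving the disjointness that yields $|T| \le \lfloor r/2 \rfloor$. This last bound is the crux, as it collapses a potential $2^{|E_{e_0}|} = 2^{2^r}$ enumeration into an $O(2^{r^2/2})$ enumeration whose size is independent of $|E|$; this independence is exactly what keeps the dependence on $|E|$ quadratic and makes the algorithm strongly polynomial in $|V|$ and $|E|$ for fixed $r$.
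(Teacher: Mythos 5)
Your proposal is correct and follows essentially the same route as the paper's proof: decompose by center $e_0$, restrict candidate neighbors to the trace-maximizing set $E_{e_0}$ of size at most $2^r$, bound $|T|\le \lfloor r/2\rfloor$ via condition~\eqref{newCond}, and enumerate the $O(2^{r^2/2})$ candidate neighbor sets with $O(r^2)$ work each, yielding the stated bound. In fact you supply the justifications the paper only asserts --- the disjointness argument giving $\sum_{i\in T}\gamma_i\le|e_0|$ and the verification that the trace-based reduction to $E_{e_0}$ preserves both admissibility and maximal violation --- so your write-up is, if anything, slightly more complete.
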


\begin{proof}
Let $e_0 \in E$; we present the separation algorithm over all extended flower inequalities centered at $e_0$.
By applying the algorithm $|E|$ times, we can solve the separation problem over all extended flower inequalities in $G$.

Observe that by assumption~\eqref{newCond}, the number of neighbours $t$ in an extended flower inequality does not exceed $\frac{r}{2}$.
First we construct the set $E_{e_0}$ defined above. For a rank-$r$ hypergraph $G$, the number of elements in $E_{e_0}$ is at most $2^r-r-1$.
It can be checked that the set $E_{e_0}$ can be constructed in $O(r 2^r |E|)$ operations.
It then follows that the total number of possible sets of neighbors of cardinality at most $\frac{r}{2}$ is given by $N =\sum_{i=1}^{r/2} \binom{2^r-r-1}{i} \leq (2^r-r)^{r/2}$.
For each possible set of $t$ neighbors, we can check the validity of condition~\eqref{newCond} in $O(t r)$ operations.
Finally, for each valid set of $t$ neighbors, we generate the corresponding unique extended flower inequality in $O(t + r)$ operations.
Therefore, the total running time of the separation algorithm is given by $O(|E|(r2^r|E|+r^2 2^{r^2/2}))$.
\end{proof}

Hence by Proposition~\ref{sepAlg}, for a fixed-rank hypergraph $G=(V,E)$, the separation problem over all extended flower inequalities
can be solved in $O(|E|^2)$ operations.  As we show in the next section, the extended flower relaxation implies~\emph{all} RMCs of a multilinear set. This is an important result as it indicates instead of solving a MIP to identify an optimal RMC and subsequently solving an LP in a lifted space (due to the addition of artificial variables),
one can solve the extended flower relaxation efficiently and obtain no worse (in fact, often better) bounds.

\section{A theoretical comparison of relaxations}
\label{sec:McFl}

In this section, we present a theoretical comparison of RMCs, the standard linearization, and the extended flower relaxation of multilinear sets.
Let $G=(V,E)$ be a hypergraph, and consider an RMC for the multilinear set $\S_G$. We denote by $\bar E$ the set containing the index set of 
all artificial variables corresponding to the RMC. That is, $z_e$, $e \in \bar E$ denote all artificial variables of this RMC. 
From the definition of RMCs it follows that $E \cap \bar E = \emptyset$, and $|e| \geq 2$ for all $e \in \bar E$.
With any RMC of $S_G$, we then associate a hypergraph $\bar G = (V, E \cup \bar E)$, and we refer to $\bar E$ as the set of artificial edges.
We denote by $\MP^{\rm RMC}_G$ the projection of the RMC onto the original space, \ie
the space of variables $z_p$, $p \in V \cup E$.

\subsection{Recursive McCormick relaxations versus the standard linearization}

Given an RMC of a multilinear set $\S_G$, in the following, we characterize the conditions under which optimizing over this RMC
is equivalent to optimizing over the standard linearization $\MP^{\rm LP}_G$ defined by~\eqref{stdre}.
%Suppose that there exist $e, e' \in E$ such that $e' \subset e$. Then it can be checked while the inequality $z_e \leq z_{e'}$ is implied by any RMC of $\S_G$, it is not implied by the standard linearization. Henceforth, we assume that $e \not\subset e'$ for any $e,e' \in E$.
We say that an RMC for $\S_G$ with the recursive sequence $\Ri_e$, $e \in E$ is \emph{non-overlapping}, if $\Ri_e \cap \Ri_{e'} = \emptyset$ for all $e \neq e' \in E$.  For example, the RMC of Example~\ref{ex1} with the recursive sequence $\Ri^1$ is non-overlapping since $\Ri_{123}^1 \cap \Ri_{234}^1 = \Ri_{123}^1 \cap \Ri_{134}^1 = \Ri_{134}^1 \cap \Ri_{234}^1 = \emptyset$, whereas, the recursive sequence $\Ri^2$ is not non-overlapping, since $\Ri^2_{234} \cap \Ri^2_{134} = \{\{3,4\}\}$. The following proposition states the relationship between RMCs and the standard linearization of multilinear sets.

\begin{proposition}\label{prop1}
Let $G=(V,E)$ be a hypergraph and consider an RMC for the multilinear set $\S_G$.
Then $\MP^{\rm RMC}_G =\MP^{\rm LP}_G$, if and only if the RMC is non-overlapping.
\end{proposition}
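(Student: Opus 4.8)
The plan is to analyze, edge by edge, the projection of the bilinear decomposition, and then exploit whether or not the decompositions of distinct edges interact. The starting point, valid for \emph{any} RMC, is the inclusion $\MP^{\rm RMC}_G\subseteq\MP^{\rm LP}_G$. For a fixed $e\in E$, its recursive decomposition is a binary tree whose leaves are the singletons $\{v\}$, $v\in e$; telescoping the upper envelopes $z_{J\cup K}\le z_J$, $z_{J\cup K}\le z_K$ up the tree yields $z_e\le z_v$ for every $v\in e$, while telescoping the lower envelopes $z_{J\cup K}\ge z_J+z_K-1$ yields $z_e\ge\sum_{v\in e}z_v-|e|+1$. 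Together with $z_e\ge 0$ and $z_v\le 1$ these are precisely the defining inequalities of $\MP^{\rm LP}_G$, so only the reverse inclusion is at issue. The key lemma I would isolate is a single-edge statement (with $V=e$): the projection of the bilinear tree of $e$ onto $(z_v:v\in e)\cup\{z_e\}$ equals the single-edge standard linearization, i.e. every pair with $\max\{0,\sum_{v\in e}z_v-|e|+1\}\le z_e\le\min_{v\in e}z_v$ lifts to feasible artificial variables. I would prove this by induction on the tree: writing the root partition as $e=J\cup K$, set $z_J:=\max\{z_e,L_J\}$ and $z_K:=\max\{z_e,L_K\}$ with $L_J,L_K$ the lower bounds of the two children, verify each lies in its admissible interval and that the envelope of $z_e=z_J z_K$ holds (the only delicate case is $z_J=L_J,\ z_K=L_K$, where $L_J+L_K-1$ telescopes to exactly the lower bound on $z_e$), and recurse into the two subtrees, which range over disjoint vertex sets.

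For the ``if'' direction, suppose the RMC is non-overlapping, so $\Ri_e\cap\Ri_{e'}=\emptyset$ for $e\ne e'$. Then no set of cardinality at least two—in particular no artificial variable and no edge—is shared between the decompositions of two distinct edges, and the bilinear systems of different edges interact only through the original variables $z_v$. Given any $\bar z\in\MP^{\rm LP}_G$, I would lift each edge's tree independently using the key lemma; since the artificial-variable blocks are pairwise disjoint, these liftings cannot conflict and together form a feasible RMC point projecting onto $\bar z$. Hence $\MP^{\rm LP}_G\subseteq\MP^{\rm RMC}_G$, which combined with the first paragraph gives $\MP^{\rm RMC}_G=\MP^{\rm LP}_G$.

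For the ``only if'' direction I argue the contrapositive. If the RMC is overlapping, choose $g\in\Ri_e\cap\Ri_{e'}$ with $e\ne e'$; then $g\subseteq e$, $g\subseteq e'$, $|g|\ge 2$, and since $g$ cannot equal both edges, after relabeling $g\subsetneq e'$. Telescoping upper envelopes in $e$'s tree gives $z_e\le z_g$, and a \emph{partial} telescoping of lower envelopes in $e'$'s tree that stops at the node $g$ gives $z_{e'}\ge z_g+\sum_{v\in e'\setminus g}z_v-|e'\setminus g|$; eliminating $z_g$ produces the valid inequality $z_e+\sum_{v\in e'\setminus g}z_v-z_{e'}\le|e'\setminus g|$ for $\MP^{\rm RMC}_G$. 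I would then exhibit a point of $\MP^{\rm LP}_G$ violating it: put $z_v=1-\tfrac{1}{2|g|}=:t$ for $v\in g$ and $z_v=1$ otherwise, set $z_e=t$ (its upper bound) and $z_{e'}=\tfrac12$ (its lower bound, which equals $1-|g|(1-t)$), and set every remaining edge variable to its standard-linearization lower bound $\max\{0,\sum_{v\in f}z_v-|f|+1\}$, which always satisfies $z_f\le\min_{v\in f}z_v$. One checks this point lies in $\MP^{\rm LP}_G$, yet its left-hand side equals $t+|e'\setminus g|-\tfrac12>|e'\setminus g|$ because $t>\tfrac12$; thus it is cut off by $\MP^{\rm RMC}_G$, giving $\MP^{\rm RMC}_G\ne\MP^{\rm LP}_G$.

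The main obstacle I anticipate is the single-edge key lemma, namely confirming that the bilinear tree projects \emph{exactly} onto the standard linearization rather than something strictly stronger: the inductive lifting must be arranged so that the chosen child values simultaneously respect their own admissible intervals and the root envelope, and the bookkeeping of which telescoped combination of envelopes yields each facet of $\MP^{\rm LP}_G$—and the partial lower-bound inequality used in the converse—must be tracked carefully. Once the lemma and these telescoping identities are established, the block-separability argument in the non-overlapping case and the explicit feasibility verification of the separating point are routine.
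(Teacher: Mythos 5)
Your proof is correct and follows essentially the same route as the paper's: you get $\MP^{\rm RMC}_G\subseteq\MP^{\rm LP}_G$ by telescoping the McCormick envelopes, the reverse inclusion under non-overlapping by lifting each edge's variable-disjoint bilinear tree independently (your explicit inductive lift $z_J=\max\{z_e,L_J\}$ makes constructive what the paper disposes of in one line by citing that the standard linearization is the convex hull of a single multilinear term), and the converse by deriving from a shared node $g\in\Ri_e\cap\Ri_{e'}$ precisely the paper's flower-type inequality~\eqref{flaux} and exhibiting a point of $\MP^{\rm LP}_G$ that violates it. The only structural difference is minor: by relabeling so that $g\subsetneq e'$ you fold the paper's two converse cases ($g$ an original edge versus an artificial one) into a single argument, and your violating point, though different from the paper's, verifies correctly.
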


\begin{proof}
First suppose that the RMC is non-overlapping with the recursive sequence $\Ri_e$, $e \in E$.  Consider an edge $\tilde e \in E$.
By the non-overlapping assumption, we have the following two properties:
\begin{itemize}
\item  There exists no $e \in \Ri_{\tilde e}$ such that $e \neq \tilde e$ and $e \in E$, as otherwise we have $\Ri_{\tilde e} \cap \Ri_{e} = \{e\}$, which contradicts the non-overlapping assumption.

\item  For any $e' \in \Ri_{\tilde e}$, we have $e' \notin \Ri_{e}$ for all $e \in E \setminus \{\tilde e\}$.
\end{itemize}
By above observations, for each $\tilde e \in E$, all variables $z_{\bar e}$, $\bar e \in \Ri_{\tilde e} \setminus \{\tilde e\}$ are artificial variables and these variables do not appear in any other $\Ri_{e}$ with $e \neq \tilde e$. Hence, for each $e \in E$, our task is to consider inequalities containing only variables $z_e$, $e \in \Ri_e$
and to project out all $z_{\bar e}$ with $\bar e \subset e$ from these inequalities.
Since the standard linearization is obtained by replacing each $z_e = \prod_{v \in e} {z_v}$, $z_v \in \{0,1\}$, $e \in E$, by its convex hull,
we deduce that $\MP^{\rm LP}_G \subseteq \MP^{\rm RMC}_G$.
Hence it suffices to show that $\MP^{\rm RMC}_G \subseteq \MP^{\rm LP}_G$. Consider a point $\tilde z_p$, $p \in V \cup E$ in $\MP^{\rm RMC}_G$.
Recall that $z_e$, $e \in \bar E$ denotes the artificial variables of the RMC.
By definition of $\MP^{\rm RMC}_G$ there exists $\tilde z_e$, $e \in \bar E$ such that $\tilde z_p$, $p \in V \cup E \cup \bar E$ satisfies the RMC.
First, from all inequalities of the form $z_{p_1 \cup p_2} \leq z_{p_1}$, $z_{p_1 \cup p_2} \leq z_{p_2}$ in this RMC, we deduce that $\tilde z_{e} \leq \tilde z_{v}$
for all $v \in e$ and for all $e \in E$. Second, from all inequalities of the form  $z_{p_1 \cup p_2} \geq z_{p_1} + z_{p_2}-1$ and $z_{p_1 \cup p_2} \leq z_{p_1}$, we deduce that
$\tilde z_v \leq 1$ for all $v \in V$. Finally, from all inequalities of the form  $z_{p_1 \cup p_2} \geq z_{p_1} + z_{p_2}-1$ in this RMC we deduce that
$\tilde z_e \geq \sum_{v \in e}{\tilde z_v}-|e|+1$ for all $e \in E$. Notice that the RMC includes inequalities $z_e \geq 0$ for all $e \in E$. We conclude that
$\tilde z_p$, $p \in V \cup E$ is present in $\MP^{\rm LP}_G$. Therefore, $\MP^{\rm RMC}_G \subseteq \MP^{\rm LP}_G$, implying if the RMC is non-overlapping, we have $\MP^{\rm RMC}_G = \MP^{\rm LP}_G$.

\begin{comment}
 Now consider all inequalities in the RMC containing some artificial variable $z_{e}$, $e \in \Ri_{\tilde e}$; these inequalities include: $z_{\tilde e} \leq z_{e_1}$, $z_{\tilde e} \leq z_{e_2}$, $z_{\tilde e} \geq z_{e_1}+ z_{e_2} -1$, and $z_{\tilde e} \geq 0$, where $\tilde e = e_1 \cup e_2$ and if $|e_1| > 1$ (resp. $|e_2| > 1$), then $z_{e_1}$ (resp. $z_{e_2}$) is an artificial variable. All remaining inequalities (if any) are of the form $z_{e_1 \cup e_2} \leq z_{e_1}$, $z_{e_1 \cup e_2} \leq z_{e_2}$, $z_{e_1 \cup e_2} \geq z_{e_1}+ z_{e_2} -1$, and $z_{e_1 \cup e_2} \geq 0$, where $z_{e_1 \cup e_2}$ is an artificial variable, and if $|e_1| > 1$ (resp. $|e_2| > 1$), then $z_{e_1}$ (resp. $z_{e_2}$) is an artificial variable as well. Using Fourier-Motzkin elimination to project out artificial variables $z_e$ for all $e \in \Ri_{\tilde e} \setminus \{\tilde e\}$ from these inequalities, we obtain:
$$
z_v \leq 1, \; \forall v \in \tilde e, \; z_{\tilde e} \leq z_v, \; \forall v \in \tilde e, \; z_{\tilde e} \geq 0, \; z_{\tilde e} \geq \sum_{v \in \tilde e}{z_v}-|\tilde e|+1.
$$
Using a similar line of arguments to project out the artificial variables $z_{e'}$, $e' \in \Ri_e \setminus \{e\}$ for all $e \in E$, we deduce that if the RMC is non-overlapping, then $\MP^{\rm RMC}_G =\MP^{\rm LP}_G$.
\end{comment}
\smallskip

Next, suppose that the RMC is not non-overlapping; \ie $\Ri_{e'} \cap \Ri_{e''} = \{\tilde e\}$ for some $e',e'' \in E$ and some $\tilde e \in E \cup \bar E$.
Two cases arise:
\begin{itemize}
\item [(i)] If $\tilde e \in E$, then without loss of generality, let $\tilde e=e''$. It then follows that the RMC implies the inequality $z_{e'} \leq z_{e''}$,
while this inequality is not implied by the standard linearization. To see this, consider the point $\tilde z_v = \tilde z_e = \frac{1}{2}$ for all $v \in V$, $e \in E \setminus \{e''\}$, and $\tilde z_{e''} = 0$. It can be checked that $\tilde z \in \MP^{\rm LP}_G$, while $\tilde z_{e'} > \tilde z_{e''}$.

\item [(ii)] If $\tilde e \in \bar E$, then the RMC implies the following inequalities $z_{e'} \leq z_{\tilde e}$ and
$z_{e''} \geq z_{\tilde e} + \sum_{v \in e'' \setminus \tilde e}{z_v}-|e'' \setminus \tilde e|$. Projecting out $z_{\tilde e}$ from these two inequalities, we conclude that
the inequality
\begin{equation}\label{flaux}
\sum_{v \in e'' \setminus \tilde e}{z_v}+ z_{e'} - z_{e''} \leq |e'' \setminus \tilde e|,
\end{equation}
is implied by $\MP^{\rm RMC}_G$. We argue that inequality~\eqref{flaux} is not implied by $\MP^{\rm LP}_G$.
To see this, consider the point $\tilde z$ defined as:  $\tilde z_v = 1$ for all $v \in e'' \setminus \tilde e$,
$\tilde z_v = \frac{1}{2}$ for all $v \in (e' \setminus e'') \cup \tilde e$, $\tilde z_v = 0$ for the remaining nodes in $G$,
$\tilde z_{e'} = \frac{1}{2}$, $\tilde z_{e''} = 0$, $\tilde z_e = 1$ for all $e \subseteq e'' \setminus \tilde e$, $\tilde z_e = 0$ for all $e \nsubseteq e' \cup e''$
and $\tilde z_e = \frac{1}{2}$ for all remaining edges in $G$.
This point does not satisfy inequality~\eqref{flaux}, as $|e'' \setminus \tilde e| + \frac{1}{2} - 0\nleq |e'' \setminus \tilde e|$. However, it can be checked that $\tilde z$ belongs to $\MP^{\LP}_G$, provided that $|\tilde e \cap e''| \geq 2$, which always holds since $\tilde e \subset e''$ and $|\tilde e| \geq 2$.
\end{itemize}
By~(i) and~(ii) above, it follows that if $\MP^{\rm RMC}_G =\MP^{\rm LP}_G$, then the RMC is non-overlapping, and this completes the proof.
\end{proof}

Roughly speaking, Proposition~\ref{prop1} indicates that the standard linearization is equivalent to the \emph{weakest} type of RMCs; \ie RMCs not exploiting common product terms in various multilinear terms. Namely, when such common expressions exist, one can always construct RMCs that are stronger than the standard linearization (see Examples~\ref{ex1} and~\ref{ex2}).

\subsection{Recursive McCormick relaxation versus the extended flower relaxation}
We now present the main result of this paper. Namely, the extended flower relaxation implies all RMCs of a multilinear set.
In fact, as we show via an example, the extended flower relaxation often provides a stronger relaxation than the best RMC.
For notational simplicity, in the following, for any node $v \in V$, instead of $z_{\{v\}}$, we write $z_v$.

\begin{proposition}\label{prop2}
Let $G=(V,E)$ be a hypergraph, and consider the multilinear set $\S_G$.
Then $\MP^{\rm EF}_G \subseteq \MP^{\rm RMC}_G$ for all RMCs of $\S_G$.
\end{proposition}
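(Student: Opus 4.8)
The plan is to argue by induction on the number $|\bar E|$ of artificial variables of the RMC. Since $\MP^{\rm EF}_G$ only adds inequalities to the standard linearization $\MP^{\rm LP}_G$, the base case $\bar E=\emptyset$ amounts to checking that each bilinear inequality in \eqref{bienv} --- now relating only original edge- and node-variables --- is implied by $\MP^{\rm EF}_G$: the two upper inequalities are monotonicity/standard-linearization inequalities (note $z_{J\cup K}\le z_J$ for $J\in E$ is the flower inequality centered at $J$ with neighbour $J\cup K$), while $z_{J\cup K}\ge z_J+z_K-1$ is the extended flower inequality \eqref{flowerIneq} centered at $J\cup K$ with neighbours $J,K$, singletons being absorbed into the linear term. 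For the inductive step I would single out one artificial variable $f\in\bar E$ and promote it to a genuine edge, setting $G'=(V,E\cup\{f\})$. The same decomposition is then an RMC for $\S_{G'}$ with artificial set $\bar E\setminus\{f\}$, so the induction hypothesis gives $\MP^{\rm EF}_{G'}\subseteq\MP^{\rm RMC}_{G'}$. Because $\MP^{\rm RMC}_G$ is obtained from $\MP^{\rm RMC}_{G'}$ by projecting out the single coordinate $z_f$, it suffices to prove the \emph{lifting statement}: every $\bar z\in\MP^{\rm EF}_G$ can be extended, by some choice of $\bar z_f$, to a point $(\bar z,\bar z_f)\in\MP^{\rm EF}_{G'}$.

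To prove the lifting statement I would apply Fourier--Motzkin elimination to the single new variable $z_f$. The inequalities of $\MP^{\rm EF}_{G'}$ that do not involve $z_f$ are exactly the inequalities of $\MP^{\rm EF}_G$, which $\bar z$ already satisfies; hence a feasible $\bar z_f$ exists if and only if every lower bound on $z_f$ is at most every upper bound on $z_f$. The lower bounds come from $z_f\ge 0$, the standard-linearization bound $z_f\ge\sum_{v\in f}z_v-|f|+1$, and the extended flower inequalities centered at $f$ (where $z_f$ has coefficient $-1$); the upper bounds come from $z_f\le z_v$, $v\in f$, and the extended flower inequalities in which $f$ occurs as a neighbour (where $z_f$ has coefficient $+1$). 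Thus the crux is to show that eliminating $z_f$ between any such lower/upper pair yields an inequality that is valid on $\MP^{\rm EF}_G$.

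The key structural fact --- a \emph{composition lemma} for extended flower inequalities --- is that adding an extended flower inequality centered at $f$ with neighbours $T_2$ to one centered at an edge $e_0$ with neighbour set $T_1\ni f$ cancels $z_f$ and produces (an inequality implied by) the extended flower inequality centered at $e_0$ with neighbour set $(T_1\setminus\{f\})\cup T_2$. Example~\ref{ex1} with $\Ri^2$ is exactly this phenomenon: eliminating the shared artificial variable $z_{34}$ reproduces the flower inequalities $z_2+z_{134}-z_{234}\le 1$ and $z_1+z_{234}-z_{134}\le 1$. The remaining lower/upper combinations --- $z_f\ge 0$ or $z_f\ge\sum_{v\in f}z_v-|f|+1$ against $z_f\le z_v$ or an extended flower inequality with $f$ a neighbour, and an extended flower inequality centered at $f$ against $z_f\le z_v$ --- are the degenerate cases $T_1=\{f\}$ or $T_2=\emptyset$ of the same computation, and reduce to standard-linearization or monotonicity inequalities.

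The main obstacle I anticipate is precisely the bookkeeping in the composition lemma. After cancelling $z_f$ one must match the vertex sets and the right-hand side constant against the extended flower inequality centered at $e_0$ with neighbours $(T_1\setminus\{f\})\cup T_2$; the difficulty is that the cancellation literally yields an extended flower inequality only when $f\subseteq e_0$ and every $e_k\in T_2$ meets $e_0$ in at least two nodes. In general, vertices of $f$ lying outside $e_0$ appear with a $+z_v$ term matched by a $+1$ on the right-hand side, and merged neighbours failing condition~\eqref{newCond} relative to $e_0$ appear with a $+z_{e_k}$ term; these must be discarded using $z_v\le 1$ and $z_{e_k}\le z_v$, exactly in the spirit of Remark~\ref{rem1}. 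The technical content is to verify that this absorption never weakens the merged bound below what is required for consistency. Once the lemma is established, every lower/upper pair eliminates to an inequality valid on $\MP^{\rm EF}_G$, a feasible $\bar z_f$ exists, the lifting statement holds, and the induction closes.
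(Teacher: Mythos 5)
Your proposal is correct, but it is organized around a genuinely different key lemma than the paper's proof. The paper inducts on the \emph{level} of the RMC: it promotes all level-one artificial variables simultaneously to genuine edges of an intermediate hypergraph $G'=(V,E\cup\tilde E)$, and its central tool (Claim~\ref{cl1}) is an \emph{explicit outer description} of the projection $\MP^{\rm RMC}_G$ by the inequalities~\eqref{proj0}--\eqref{proj2}, indexed by flower partitions of each edge; Fourier--Motzkin elimination of the level-one variables is carried out on that explicit description, and only at the end is the whole family shown to be implied by $\MP^{\rm EF}_G$ via Remark~\ref{rem1}. You instead induct on $|\bar E|$ one artificial variable at a time, and your key lemma is a statement purely about extended flower relaxations, with no reference to the RMC at all: projecting the single coordinate $z_f$ out of $\MP^{\rm EF}_{G'}$ yields a polytope containing $\MP^{\rm EF}_G$. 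Your composition lemma does hold as you anticipate: adding an inequality centered at $e_0$ with $f\in T_1$ to one centered at $f$ with neighbours $T_2$ produces the inequality with merged neighbour set $(T_1\setminus\{f\})\cup T_2$ up to surplus terms, and a direct count shows the number of stray $z_v$ and duplicated or non-adjacent $z_{e_k}$ terms equals exactly the surplus in the right-hand sides (also when $T_1\cap T_2\neq\emptyset$), so absorbing them with $z_v\le 1$, $z_{e_k}\le z_v$ and the reductions of Remark~\ref{rem1} never falls short; the pairs involving $z_f\ge 0$ are outright redundant because the eliminant then has exactly as many unit-bounded positive terms as its right-hand side, mirroring the redundancy observation in the paper's inductive step. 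What each route buys: your lifting lemma is more modular and arguably more general, exhibiting the extended flower relaxation as closed, up to implication, under deletion of a single edge, a fact independent of any recursive sequence; the paper's route pays the price of the flower-partition bookkeeping but obtains in exchange an explicit linear description of $\MP^{\rm RMC}_G$ in the original variables, which is of independent interest and makes the final comparison with $\MP^{\rm EF}_G$ immediate.
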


\begin{proof}
Consider an RMC of $\S_G$; we assume that this RMC does not satisfy the non-overlapping property as otherwise the result follows from Proposition~\ref{prop1}.
%In the following, using Fourier-Motzkin elimination, we project out all artificial variables $z_e$, $e \in \bar E$ from the description of the RMC and show that the resulting inequalities are implied by the inequalities defining the flower relaxation.
%Notice that any RMC can be written as a collection of system of inequalities for each $e \in E$, where each system only contains edge variables $z_{e'}$, $e' \in \Ri_e$.
For each $\tilde e \in E$ and each artificial edge $\bar e \in  \Ri_{\tilde e} \cap\bar E$, define
\begin{equation}\label{e8}
\P_{\tilde e, \bar e} := \{e\in E \setminus \{\tilde e\}: \; \bar e \in \Ri_e\}.
\end{equation}
For each $e \in E$, denote by $T_e$ a partition of $e$ such that each $p \in T_e$ satisfies $p \in e \cup \Ri_e$
and for $p \in T_e \cap \bar E$, we have $\P_{e,p} \neq \emptyset$. Denote by $w_{e,p}$ be an element of $\P_{e,p}$.
Recall that by definition of partition we have $p \cap p' = \emptyset$ for all $p,p' \in T_e$ and $\cup_{p \in T_e} p = e$.
We refer to any such partition $T_e$ of $e$ as a \emph{flower partition} of $e$.
Let $\bar T_e$ be a subset of $T_e$ consisting of all artificial edges.
Denote by $\T_e$ the set consisting of all flower partitions of $e$.
To complete the proof it suffices to prove the following claim:

\begin{claim}\label{cl1}
The polytope $\MP^{\rm RMC}_G$ is defined by the following inequalities:
\begin{align}
& z_v \leq 1, \quad \forall v \in V, \quad z_e \geq 0, \quad \forall e \in E \label{proj0}\\
& z_e \leq z_p, \quad \forall p \in e \cup (\Ri_e \cap E), \;\forall e \in E \label{proj1}\\
& \sum_{p \in T_e \setminus \bar T_e}{z_p}+ \sum_{p \in \bar T_e}{z_{w_{e,p}}} - z_e \leq |T_e|-1, \quad \forall w_{e,p} \in \P_{e,p}, \; \forall T_e \in \T_e, \; \forall e \in E.\label{proj2}
\end{align}
\end{claim}

Notice that all above inequalities are implied by $\MP^{\rm EF}_G$. That is, inequalities~\eqref{proj0} are present in the standard linearization;
inequalities~\eqref{proj1} are flower inequalities centered at $p$, if $p \in E$, and
are present in the standard linearization, if $p \in e$. Inequalities~\eqref{proj2} are extended flower inequalities (or are implied by extended flower inequalities, see Remark~\ref{rem1}) centered at $e$, if there exists some $p \in T_e$
with $|p| \geq 2$, and are present in the standard linearization otherwise, as in the latter case they simplify to $\sum_{v \in e}{z_v}-z_e \leq |e|-1$.

To prove Claim~\ref{cl1} we need to introduce some notation.
Consider an artificial edge $\bar e \in \bar E$ and consider any $e \in E$ such that $\bar e \in \Ri_e$.
% and let $\P_{\tilde e}:= \{e \in E: \tilde e \in \Ri_e\}$.
%and for each $e \in \P_{\tilde e}$ define $t^e_{\tilde e} = |\{e' \in \Ri_e \cap \bar E: e' \supset \tilde e\}|$.
We define the~\emph{level} of the artificial variable $z_{\bar e}$ as
%$$t_{\bar e}:= \max_{e \in E: \bar e \in \Ri_e}{\Big|\{e' \in \Ri_e \cap \bar E: \bar e \supseteq e'\}\Big|}.$$
$$t_{\bar e}:= \Big|\{e' \in \Ri_e \cap \bar E:     \bar e \supseteq e'\}\Big|.$$
Note that from the definition of $t_e$ it follows that $1 \leq t_e \leq r-2$ for all $e \in \bar E$, where as before $r$ denotes the rank of $\S_G$.
Moreover, we define \emph{the level of an RMC} as
$$t := \max_{e \in \bar E}{t_e},$$
and we define $t := 0$, if the RMC contains no artificial variables.

\medskip

\emph{Proof of Claim~1.} We prove by induction on level $t$ of the RMC:

\emph{The base case.} Consider an RMC of level zero; that is, the RMC does not contain any artificial variables. Then all inequalities defining the RMC
are of the following forms:
%First consider an edge $e \in E$ such that $\Ri_e$ does not contain any artificial variables. Then the system of inequalities corresponding to $\Ri_e$
%have the following form:
\begin{align*}
& z_{p \cup q} \geq 0,\\
& z_{p \cup q} - z_{p} \leq 0,\\
& z_{p \cup q} - z_{q} \leq 0,\\
& z_{p} + z_{q} - z_{p \cup q} \leq 1,
\end{align*}
where $p, q \in V \cup E$ and $p \cup q \in E$. All above inequalities are present in System~\eqref{proj0}-\eqref{proj2}.
The first inequality is of the form~\eqref{proj0}, the second and third inequalities are of the form~\eqref{proj1} and, the fourth inequality is of the form~\eqref{proj2}
with $\bar T_e = \emptyset$ and $|T_{e}| = 2$ for all $e \in E$.

\medskip

\emph{The inductive step.} Now consider an RMC of level $k$ for the multilinear set $\S_G$, $G=(V, E)$ with artificial variables $z_e$, $e \in \bar E$.
%Denote by $\tilde E$ the set of all artificial edges corresponding to level-1 variables
Let $z_e$, $e \in \tilde E$ denote all level-1 artificial variables
in this RMC.
If $\tilde E = \emptyset$, then the proof follows from the base case. Henceforth, suppose that $\tilde E \neq \emptyset$.
It can be checked that this RMC is equivalent to an RMC of level $k-1$ for the multilinear set $\S_{G'}$, $G' = (V, E')$,
where $E' = E \cup \tilde E$ and with artificial variables $z_e$, $e \in \bar E \setminus \tilde E$.
Denote by $T'_e$ a flower partition of $e \in E \cup \tilde E$ in this RMC and denote by $\bar T'_e = T'_e \cap (\bar E \setminus \tilde E)$.
Therefore, by the induction hypothesis, the polytope $\MP^{\rm RMC}_{G'}$ is defined by the following inequalities:
\begin{align}\label{projk}
& z_v \leq 1, \quad \forall v \in V\nonumber\\
& z_e \geq 0, \quad \forall e \in E \cup \tilde E \nonumber\\
& z_e \leq z_p, \quad \forall p \in e \cup (\Ri_e \cap (E \cup \tilde E)), \;\forall e \in E \cup \tilde E\\
& \sum_{p \in T'_e \setminus \bar T'_e}{z_p}+ \sum_{p \in \bar T'_e}{z_{w_{e,p}}} - z_e \leq |T'_e|-1, \quad \forall w_{e,p} \in \P'_{e,p}, \; T'_e \in \T'_e, \; \forall e \in E \cup \tilde E,\nonumber
\end{align}
where as before $\T'_e$ denotes the set of all flower partitions of $e$, and $\P'_{e,p} = \{e' \in E \cup \tilde E \setminus \{e\}: p \in \Ri_{e'}\}$.
It can be checked that any inequality in System~\eqref{projk} not containing any artificial variable $z_e$, $e \in \tilde E$ is also present in System~\eqref{proj0}-\eqref{proj2}.
Hence to construct $\MP^{\rm RMC}_{G}$, it suffices to project out the remaining artificial variables $z_e$, $e \in \tilde E$ from System~\eqref{projk}.
Notice that all artificial edges $e \in \tilde E$ are level-1 edges; \ie for each $e \in \tilde E$, we have $e = p_e \cup q_e$ for some $p_e, q_e \in V \cup E$.
As a result, there exists no $e \neq e' \in \tilde E$ such that $e' \in \Ri_e$, \ie $T'_e \cap \tilde E= \emptyset$ for all $T'_e \in \T'_e$ and for all $e \in \tilde E$.
Now consider the RMC of $\S_{G'}$ defined above and for each $e \in E$, consider a flower partition $T'_e$ of $e$ for which there exists some $\tilde e \in \tilde E$ satisfying $\tilde e \in T'_e$.
Define $\tilde T_e := T'_e \cap \tilde E$. It can be checked that $T'_e$ coincides with a flower partition $T_e$ of $e$ in the RMC for $\S_G$ with the only difference that
the subset of $T_e$ containing edges of the artificial variables are $\bar T_e = \bar T'_e \cup \tilde T_e$.
%It then follows that for each $e \in \tilde E$, we have $T_e \subset V \cup E$  for all $T_e \in \T_e$.
%Define $\N_{\tilde e} = \{e \in E : \tilde e \in \Ri_e\}$.
It then follows that any artificial variable $z_{\tilde e}$, $\tilde e \in \tilde E$ only
appears in the following inequalities of System~\eqref{projk}:
\begin{align}
& z_{\tilde e} \geq 0 \label{k1}\\
& z_{\tilde e} \leq z_{p_{\tilde e}}, \; z_{\tilde e} \leq z_{q_{\tilde e}}\label{k2}\\
& z_{p_{\tilde e}} + z_{q_{\tilde e}} - z_{\tilde e} \leq 1\label{k3}\\
& z_e \leq z_{\tilde e}, \quad \forall e \in E: \tilde e \in \Ri_e\label{k4}\\
& \sum_{p \in \tilde T_e}{z_p}+\sum_{p \in T_e \setminus \bar T_{e}}{z_p}+ \sum_{p \in \bar T_{e} \setminus \tilde T_{e}}{z_{w_{e,p}}} - z_{e} \leq |T_{e}|-1, \quad
 \forall  w_{e,p} \in \P_{e,p}, \; \forall T_{e}: \tilde e \in \tilde T_e, \; \forall e \in E: \tilde e \in \Ri_e,\label{k5}
\end{align}
where $\P_{e,p}$ is defined by~\eqref{e8}.
Next, using Fourier-Motzkin elimination, we project out $z_{\tilde e}$, $\tilde e \in \tilde E$ from System~\eqref{k1}-\eqref{k5}.
To this end, it suffices to consider the following cases:

\medskip

\emph{Inequalities~\eqref{k1}-\eqref{k4}}: first consider inequalities~\eqref{k1} and~\eqref{k2}; projecting out $z_{\tilde e}$ we obtain $z_{p_{\tilde e}}\geq 0$ 
and $z_{q_{\tilde e}} \geq 0$,
both of which are implied by inequalities~\eqref{proj0} and~\eqref{proj1}. Next, consider inequalities~\eqref{k2} and~\eqref{k3}; projecting out $z_{\tilde e}$ we obtain $z_{p_{\tilde e}}\leq 1$ and $z_{q_{\tilde e}} \leq 1$; again these inequalities are implied by inequalities~\eqref{proj0} and~\eqref{proj1}. Finally, we consider inequalities~\eqref{k2}
and~\eqref{k4}. Projecting out $z_{\tilde e}$ we obtain
$$
z_e \leq z_{p_{\tilde e}}, \; z_e \leq z_{q_{\tilde e}}, \quad \forall e \in E: \; \tilde e \in \Ri_e.
$$
Recall that $\tilde e = p_{\tilde e} \cup q_{\tilde e}$ for some $p_{\tilde e}, q_{\tilde e} \in V \cup E$. Since $\tilde e \in \Ri_e$, from the definition of $\Ri_e$ it follows that, if $|p_{\tilde e}| > 1$ (resp. $|q_{\tilde e}| >1$), then $p_{\tilde e} \in \Ri_e$ (resp. $q_{\tilde e} \in \Ri_e$). Hence, the above inequalities are present among inequalities~\eqref{proj1}.

\emph{Inequalities~\eqref{k3}-\eqref{k5}}: We now project out $z_{\bar e}$, $\bar e \in \tilde T_e$ from inequalities~\eqref{k5}.
First note that if we use inequality~\eqref{k1} to project some $z_{\bar e}$ from~\eqref{k5}, we obtain a redundant inequality, since the left-hand side of the
resulting inequality cannot exceed the right-hand side. Next, consider inequalities~\eqref{k4} for some $\bar e \in \tilde T_e$; If $|\{e \in E: \bar e \in \Ri_e\}| =1$,
then projecting $z_{\bar e}$ from~\eqref{k4} and~\eqref{k5} gives the redundant inequality:
$$\sum_{p \in \tilde T_e \setminus \{e\}}{z_p}+\sum_{p \in T_e \setminus \bar T_{e}}{z_p}+ \sum_{p \in \bar T_{e} \setminus \tilde T_{e}}{z_{w_{e,p}}} \leq |T_{e}|-1.$$
Hence, we use inequalities~\eqref{k4} to project out an artificial variable $z_{\bar e}$ only if $|\{e \in E: \bar e \in \Ri_e\}| \geq 2$.

Now let $\tilde T_e = \tilde T^1_e \cup \tilde T^2_e$, for any (possibly empty) $\tilde T^1_e$ and $\tilde T^2_e$.
Let $\bar e = p_{\bar e} \cup q_{\bar e}$ for any $\bar e \in \tilde T^1_e$, where $p_{\bar e}, q_{\bar e} \in V \cup E$.
We now project out artificial variables $z_{\bar e}$, $\bar e \in \tilde T_e$ from inequalities~\eqref{k5}, using inequalities~\eqref{k3} for all $\bar e \in \tilde T^1_e$,
and using inequalities~\eqref{k4} for all $\bar e \in \tilde T^2_e$, to obtain:
\begin{equation}\label{semifl}
\sum_{\bar e \in \tilde T^1_e}{(z_{p_{\bar e}} + z_{q_{\bar e}})} +\sum_{\bar e \in \tilde T^2_e}{z_{f_{\bar e}}}+\sum_{p \in T_e \setminus \bar T_{e}}{z_p}+ \sum_{p \in \bar T_{e} \setminus \tilde T_{e}}{z_{w_{e,p}}} - z_{e} \leq |T_{e}|+|\tilde T^1_e|-1,
\end{equation}
for all  $T_{e}$ such that  $\tilde T_e \neq \emptyset$ and for all $e \in E$ such that $\Ri_e \cap \tilde E \neq \emptyset$, where we define $f_{\bar e}$ an edge in $E$
different from $e$ such that $\bar e \in \Ri_{f_{\bar e}}$. Now consider the flower partition $T_e$ of $e$ and define a new flower partition $T'_e$ of $e$, where each $\bar e \in \tilde T^1_e$ is replaced by $p_{\bar e}, q_{\bar e}$. Since by definition $\bar e = p_{\bar e} \cup q_{\bar e}$, it follows that $|T'_e| = |T_{e}|+|\tilde T^1_e|$. Hence, any inequality of the form~\eqref{semifl} is present among inequalities~\eqref{proj2}, or by Remark~\ref{rem1} is implied by these inequalities and this completes the proof.
\end{proof}

We conclude this paper by showing via a simple example that the extended flower relaxation (even the flower relaxation)
often provides a stronger relaxation than any RMC.

\begin{example}
Consider the hypergraph $G= (V, E)$ with $V = \{v_1, \ldots, v_8\}$ and $E = \{e_0, e_1, e_2, e_3, e_4\}$, where $e_0 = \{v_1,\ldots,v_4\}$,
$e_1 = \{v_1,v_2,v_5\}$, $e_2 = \{v_2,v_3,v_6\}$, $e_3 = \{v_3,v_4,v_7\}$, and $e_4 = \{v_1,v_4,v_8\}$. Then the flower relaxation of $\S_G$ contains
the following six flower inequalities centered at $e_0$:
\begin{align}
&\sum_{v \in e_0 \setminus e_i}{z_v}+z_{e_i}-z_{e_0} \leq 2, \quad i \in \{1,\ldots, 4\}\label{fl1}\\
&z_{e_i}+z_{e_j}-z_{e_0} \leq 1, \quad (i,j) \in \{(1,3), (2,4)\}.\label{fl2}
\end{align}
Now consider an RMC of $\S_G$. Then the above inequalities are implied by this RMC, only if $\Ri_{e_0} \supseteq \{\{v_1, v_2\}, \{v_2, v_3\}, \{v_3, v_4\}, \{v_1, v_4\}\}$.
However this is not possible as the set of all possible recursive sequences of $z_{e_0}$ are given by
\begin{itemize}
\item [(i)] $\{\{v_1, v_2, v_3, v_4\}, \{v_i, v_j, v_k\}, \{v_i, v_j\}\}$
for all $i\neq j \neq k \in \{1,\ldots,4\}$, which implies only one of the flower inequalities of~\eqref{fl1}, centered at $e_0$, and
\item [(ii)] $\{\{v_1, v_2, v_3, v_4\}, \{v_i, v_j\}, \{v_k, v_l\}\}$ for all $i\neq j \neq k \neq l \in \{1,\ldots,4\}$, which implies two flower inequalities of~\eqref{fl1} and one flower inequality of~\eqref{fl2}.
\end{itemize}
\end{example}

\bibliographystyle{plain}
\bibliography{aida,biblio}

\end{document}